\newtheorem{thm}{Theorem}[section]
\newtheorem{cor}[thm]{Corollary}
\newtheorem{lem}[thm]{Lemma}
\newtheorem{prop}[thm]{Proposition}
\theoremstyle{definition}
\newtheorem{defn}[thm]{Definition}
\theoremstyle{remark}
\newtheorem{rem}[thm]{Remark}
\numberwithin{equation}{section}
\begin{document}
\title[]{\textsc{\textbf{ SPECTRAL RADIUS ALGEBRAS OF WCE OPERATORS}}}
\author{\sc\bf Y. Estaremi and M. R. Jabbarzadeh }
\address{\sc Y. Estaremi and M. R. Jabbarzadeh }
\email{yestaremi@pnu.ac.ir} \email{mjabbar@tabrizu.ac.ir}

\address{Department of Mathematics, Payame Noor University , P. O. Box: 19395-3697, Tehran,
Iran}
\address{Faculty of Mathematical
Sciences, University of Tabriz, P. O. Box: 5166615648, Tabriz,
Iran}

\thanks{}
\thanks{}
\subjclass[2010]{47A65;47L30.}
\keywords{conditional expectation operator, spectral algebras, invariant subspace, rank one operator.}
\date{}
\dedicatory{}

\begin{abstract}
In this paper, we investigate the spectral radius algebras related to the weighted conditional expectation operators on the Hilbert spaces $L^2(\mathcal{F})$. We give a large classes of operators on $L^2(\mathcal{F})$ that have the same spectral radius algebra. As a consequence we get that the spectral radius algebras of a weighted conditional expectation operator and its Aluthge transformation are equal.  Also, we obtain an ideal of the spectral radius algebra related to the rank one operators on the Hilbert space $\mathcal{H}$. Finally we get that the operator $T$ majorizes all closed range elements of the spectral radius algebra of $T$, when $T$ is a weighted conditional expectation operator on $L^2(\mathcal{F})$ or a rank one operator on the arbitrary Hilbert space $\mathcal{H}$.
\end{abstract}

\maketitle

\commby{}


\section{\textsc{Introduction}}
Let $(X, \mathcal{F}, \mu)$ be a complete $\sigma$-finite measure space. All sets and functions statements  are to be interpreted as holding up to sets of measure zero.
For a $\sigma$-subalgebra $\mathcal{A}$ of $\mathcal{F}$, the conditional expectation operator associated with $\mathcal{A}$ is the mapping $f\rightarrow E^{\mathcal{A}}f,$ defined for all non-negative $f$ as well as for all $f\in L^2(\mathcal{F})=L^2(X, \mathcal{F}, \mu)$, where $E^{\mathcal{A}}f$ is the unique $\mathcal{A}$-measurable function satisfying
$\int_{A}(E^{\mathcal{A}}f)d\mu=\int_{A}fd\mu$, for all $A\in \mathcal{A}$.
We will often write $E$ for $E^{\mathcal{A}}$.
The mapping
$E$ is a linear orthogonal projection from  $L^2(\mathcal{F})$ onto  $L^2(\mathcal{A})$.
For more details on the properties of $E$ see \cite{rao}.\\

We continue our investigation about the class of bounded linear operators on the $L^p$-spaces having the form $M_wEM_u$, where $E$ is the conditional expectation operator, $M_w$ and $M_u$ are (possibly unbounded) multiplication operators and it is called weighted conditional expectation operator. Our interest in operators of the form $M_wEM_u$ stems from the fact that such forms tend to appear often in the study of those operators related to conditional expectation. Weighted conditional expectation operators appeared in \cite{dou}, where it is shown that every contractive projection on certain $L^1$-spaces can be decomposed into an operator of the form $M_wEM_u$ and a nilpotent operator. For more strong results about weighted conditional expectation operators one can see \cite{dhd,g,lam,mo}. In these papers one can see that a large classes of operators are of the form of weighted conditional expectation operators.\\

Let $\mathcal{H}$ be a Hilbert space and $\mathcal{B}(\mathcal{H})$ be the algebra of all bounded linear operators on $\mathcal{H}$.
$\mathcal{N}(T)$ and
$\mathcal{R}(T)$ denote the null-space and range of an operator $T$,
respectively.
A closed subspace $\mathcal{M}$ of $\mathcal{H}$ is said to be invariant for an operator $T\in\mathcal{B}(\mathcal{H})$ if $T\mathcal{M}\subseteq \mathcal{M}$. The collection of all invariant subspaces of $T$ is a lattice and it is denoted by $Lat(T)$. If $\mathcal{M}$ is invariant for all operators commute with $T$, then it is called a hyperinvariant subspace for $T$. The description $Lat(T)$ is an open problem. Some author describes $Lat(T)$ in the special case of $T$. In \cite{lp}, Lambert and Petrovic introduced a modified version of a class of operator algebras that is called spectral radius algebras. Since a spectral radius algebra related to an operator $T\in\mathcal{B}(\mathcal{H})$ ($\mathcal{B}_{T}$) contains all operators that commute with $T$ ($\{T\}'$), then the invariant subspaces of $\mathcal{B}_{T}$ are hyperinvariant subspaces of $T$. In \cite{lp}, the authors established several sufficient conditions for $\mathcal{B}_{T}$ to have a nontrivial invariant subspace. When $T$ is compact the results of \cite{lp} generalizes the Lomonosov's theorem. In \cite{bls}, the authors demonstrated that for a subclasses of normal operators $\mathcal{B}_{T}$ has a nontrivial invariant subspace.
Spectral radius algebras for complex symmetric operators are
discussed in \cite{jke}.\\

In this paper we investigate the spectral radius algebras related to the weighted conditional expectation operators on the Hilbert spaces $L^2(\mathcal{F})$. We will show that there are lots of operators on $L^2(\mathcal{F})$ such as $T$ with $\mathcal{B}_T\neq \{T\}'$. In addition, we obtain an ideal of the spectral radius algebra related to the rank one operators on the Hilbert space $\mathcal{H}$. Finally we get that the operator $T$ majorizes all closed range elements of the spectral radius algebra of $T$, when $T$ is a weighted conditional expectation operator on $L^2(\mathcal{F})$ or a rank one operator on the arbitrary Hilbert space $\mathcal{H}$.

\section{ \textsc{spectral radius algebras }}

For notation and basic terminology concerning spectral radius algebras, we refer the reader to
\cite{blpw,lp}.\\

Let $\mathcal{H}$ be a Hilbert space, $T\in \mathcal{B}(\mathcal{H})$ and let $r(T)$ be the spectral radius of $T$. For $m\geq1$ we define
\begin{equation}\label{e1}
R_m(T)=R_m:=\left(\sum^{\infty}_{n=0}d^{2n}_mT^{\ast ^n}T^n\right)^{\frac{1}{2}},
\end{equation}
where $d_m=\frac{1}{1/m+r(T)}$. Since $d_m\uparrow 1/r(T)$, the sum in (\ref{e1}) is norm convergent
and for each $m$,  $R_m$ is well defined, positive and invertible. The spectral radius algebra $\mathcal{B}_T$ of $T$ consists of all operators $S\in \mathcal{B}(\mathcal{H})$ such that
$$\sup_{m\in \mathbb{N}}\|R_mSR^{-1}_m\|<\infty.$$
$\mathcal{B}_T$ is an algebra and it contains all operators commute with $T$.
Throughout this section we assume that
$w,u\in\mathcal{D}(E):=\{f\in
L^0(\mathcal{F}): E(|f|)\in L^0(\mathcal{A})\}$.
Now we recall the definition of weighted conditional expectation operators on $L^2(\mathcal{F})$.
\begin{defn}
Let $(X,\mathcal{F},\mu)$ be a $\sigma$-finite measure space and let $\mathcal{A}$ be a
$\sigma$-subalgebra of $\mathcal{F}$ such that $(X,\mathcal{A},\mu_{\mathcal{A}})$ is
also $\sigma$-finite. Let $E$ be the conditional
expectation operator relative to $\mathcal{A}$. If $u, w \in L^0(\mathcal{F})$,
the spaces of $\mathcal{F}$-measurable functions on $X$, such that $uf$ is conditionable
\cite{her} and $wE(uf)\in L^{2}(\mathcal{F})$ for all $f\in \mathcal{D}\subseteq L^{2}(\mathcal{F})$,
where $\mathcal{D}$ is a linear subspace, then the corresponding weighted conditional
expectation (or briefly WCE) operator is the linear transformation
$M_wEM_u:\mathcal{D}\rightarrow L^{2}(\mathcal{F})$ defined by $f\rightarrow wE(uf)$.
\end{defn}
As was proved in \cite{ej} we have an equivalent condition for boundedness of the weighted conditional expectation operators $M_wEM_u$ on $L^2(\mathcal{F})$ as the next theorem.
\begin{thm}\label{t2}
The operator $T=M_{w}EM_{u}:
L^2(\mathcal{F})\rightarrow L^2(\mathcal{F})$ is bounded if and only if
$(E(|w|^2)^{\frac{1}{2}}) (E(|u|^2)^{\frac{1}{2}})\in
L^{\infty}(\mathcal{A})$, in this case
$\|T\|=\|(E(|w|^2)^{\frac{1}{2}})
(E(|u|^2)^{\frac{1}{2}})\|_{\infty}$.
\end{thm}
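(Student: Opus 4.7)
The plan is to reduce the question to the conditional Cauchy--Schwarz inequality $|E(uf)|^{2}\le E(|u|^{2})E(|f|^{2})$ and the pulling-out property, and then to construct explicit test functions that saturate both. For the upper bound, given $f$ in the domain of $T$, write
\[
\|Tf\|_{2}^{2}=\int_{X}|w|^{2}|E(uf)|^{2}\,d\mu=\int_{X}E(|w|^{2})\,|E(uf)|^{2}\,d\mu,
\]
the second equality being legal because $|E(uf)|^{2}$ is $\mathcal{A}$-measurable. Inserting conditional Cauchy--Schwarz and then pulling out $E(|w|^{2})E(|u|^{2})$ (which is $\mathcal{A}$-measurable) yields
\[
\|Tf\|_{2}^{2}\le \bigl\|E(|w|^{2})E(|u|^{2})\bigr\|_{\infty}\int_{X}E(|f|^{2})\,d\mu=\bigl\|E(|w|^{2})^{1/2}E(|u|^{2})^{1/2}\bigr\|_{\infty}^{2}\|f\|_{2}^{2},
\]
giving boundedness together with the inequality $\|T\|\le\|E(|w|^{2})^{1/2}E(|u|^{2})^{1/2}\|_{\infty}$.

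For the reverse inequality, and simultaneously for necessity, I would localise on the $\mathcal{A}$-measurable set $S=\{E(|u|^{2})>0\}$, pick $A\in\mathcal{A}$ with $A\subset S$ and $0<\mu(A)<\infty$ (possible by $\sigma$-finiteness of $\mu_{\mathcal{A}}$), and test against
\[
f_{A}:=\frac{\bar u\,\chi_{A}}{E(|u|^{2})^{1/2}}.
\]
A direct computation, pulling out the $\mathcal{A}$-measurable factor $\chi_{A}E(|u|^{2})^{-1/2}$, gives $E(uf_{A})=\chi_{A}E(|u|^{2})^{1/2}$, hence
\[
\|Tf_{A}\|_{2}^{2}=\int_{A}E(|w|^{2})\,E(|u|^{2})\,d\mu,\qquad \|f_{A}\|_{2}^{2}=\mu(A).
\]
Thus $\|Tf_{A}\|_{2}^{2}/\|f_{A}\|_{2}^{2}$ is the $\mu_{\mathcal{A}}$-average of $E(|w|^{2})E(|u|^{2})$ over $A$. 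Localising $A$ inside $\{E(|w|^{2})E(|u|^{2})>\alpha\}$ for $\alpha$ approaching the essential supremum of this $\mathcal{A}$-measurable function yields $\|T\|\ge\|E(|w|^{2})^{1/2}E(|u|^{2})^{1/2}\|_{\infty}$; if that essential supremum is $+\infty$, the same localisation forces $\|Tf_{A}\|_{2}/\|f_{A}\|_{2}$ to be unbounded, so $T$ cannot be bounded. This simultaneously delivers the norm formula and the ``only if'' direction.

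The main technical nuisance will be the bookkeeping about null sets. On $S^{c}=\{E(|u|^{2})=0\}$ the function $uf$ is a.e.\ zero for every $f$, so $S^{c}$ contributes nothing to either side and the construction of $f_{A}$ must be confined to $S$. One also needs to verify that $f_{A}$ genuinely lives in the stated domain $\mathcal{D}\subseteq L^{2}(\mathcal{F})$ of $T$; since $f_{A}$ is essentially bounded on a set of finite measure, this is routine, and by the density of such functions the a priori estimate on the domain extends to all of $L^{2}(\mathcal{F})$ when the supremum is finite. No further ideas beyond the two basic tools — conditional Cauchy--Schwarz with equality case $f\propto\bar u$ relative to $\mathcal{A}$, and pulling out $\mathcal{A}$-measurable factors — are needed.
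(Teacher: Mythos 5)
Your argument is correct; note that the paper does not actually prove this theorem but quotes it from \cite{ej}, and your proof (conditional Cauchy--Schwarz plus pulling out $\mathcal{A}$-measurable factors for the upper bound, test functions $f_A=\bar u\chi_A/E(|u|^2)^{1/2}$ localised on $\{E(|w|^2)E(|u|^2)>\alpha\}$ for the lower bound and for necessity) is essentially the standard argument given there. The only cosmetic slip is the claim that $f_A$ is ``essentially bounded''; it need not be, but your own computation $\|f_A\|_2^2=\mu(A)<\infty$ already puts it in $L^2(\mathcal{F})$, so nothing is lost.
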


Let $T=M_wEM_u$ be a bounded operator on $L^2(\mathcal{F})$. Direct computations shows that for every $n\in \mathbb{N}$ (natural numbers) we have
\begin{align*}
T^nf&=(E(uw))^{n-1}wE(uf);\\
T^{\ast ^n}f&=(\overline{E(uw)})^{n-1}\bar{u}E(\bar{w}f).
\end{align*}

Since $R_m=R_m(M_wEM_u)$ is
positive and invertible operator, we obtain
$$R_m=\left(I+M_{(E(|w|^2)\sum^{\infty}_{n=1}d^{2n}_m|E(uw)|^{2(n-1)})}M_{\bar{u}}EM_u\right)^{\frac{1}{2}}.$$
It is easy to see that the following equality holds almost every where on $X$.
$$\sum^{\infty}_{n=1}d^{2n}_m|E(uw)|^{2(n-1)}=\frac{d^{2}_m}{1-d^2_m|E(uw)|^2}.$$
If we set
$$v_m=\frac{d^{2}_mE(|w|^2)}{1-d^2_m|E(uw)|^2},$$
then we have
$$R_m=\left(I+M_{v_m\bar{u}}EM_u\right)^{\frac{1}{2}}.$$
By an elementary technical method we can compute the inverse of $R_m$ as follow:
$$R^{-1}_m=\left( I+M_{\frac{v_m\bar{u}}{v_mE(|u|^2)-1}}EM_{u} \right)^{\frac{1}{2}}.$$

Here we recall a fundamental lemma in operator theory.
\begin{lem}\label{l1} Let $T$ be a bounded operator on the Hilbert space $\mathcal{H}$ and $\lambda\geq0$. Then we have
$$\|\lambda I+T^{\ast}T\|=\lambda+\|T^{\ast}T\|=\lambda+\|T\|^2.$$
Specially, if $T$ is a positive operator, then
$\|\lambda I+T\|=\lambda+\|T\|$.
\end{lem}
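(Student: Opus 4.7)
The plan is to first establish the special case (for a positive operator $T$) and then deduce the main identity by applying that special case to the positive operator $T^{\ast}T$, together with the C*-identity $\|T^{\ast}T\|=\|T\|^{2}$.

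For the special case, my approach is to invoke the standard variational characterization of the norm of a positive operator on a Hilbert space, namely
\[
\|A\| \;=\; \sup_{\|x\|=1}\langle Ax,x\rangle \qquad (A\geq 0).
\]
Since $\lambda\geq 0$, the operator $\lambda I + A$ is a sum of positive operators, hence positive, so the same formula applies and yields
\[
\|\lambda I + A\| = \sup_{\|x\|=1}\bigl(\lambda\|x\|^{2} + \langle Ax,x\rangle\bigr) = \lambda + \sup_{\|x\|=1}\langle Ax,x\rangle = \lambda + \|A\|.
\]
Setting $A=T^{\ast}T$ and using $\|T^{\ast}T\|=\|T\|^{2}$ gives the two stated equalities at once.

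An alternative, more spectral route is to observe that $\sigma(\lambda I + B) = \lambda + \sigma(B)$, that for a positive operator the norm coincides with the spectral radius, and that positivity of $\lambda I + A$ is preserved under the hypothesis $\lambda\geq 0$. This produces $\|\lambda I + A\| = r(\lambda I + A) = \lambda + r(A) = \lambda + \|A\|$ without any explicit reference to the numerical range.

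There is no serious obstacle in this lemma; it is a routine exercise in operator theory. The only subtle point worth flagging is where the hypothesis $\lambda\geq 0$ enters: it is used precisely to guarantee that $\lambda I + A$ remains positive, which is what allows the operator norm to be read off either from the diagonal of the quadratic form or from the spectral radius. Without this, one would only get an inequality $\|\lambda I + A\| \leq |\lambda| + \|A\|$ rather than the desired equality.
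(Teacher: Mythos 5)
Your proof is correct and complete. The paper itself offers no argument here --- its ``proof'' is literally the sentence ``It is an easy exercise'' --- so there is no authorial approach to compare against; your argument via the variational formula $\|A\|=\sup_{\|x\|=1}\langle Ax,x\rangle$ for positive $A$, combined with the C*-identity $\|T^{\ast}T\|=\|T\|^{2}$, is the standard way to fill this gap, and your remark that $\lambda\geq 0$ is exactly what keeps $\lambda I+A$ positive correctly identifies the one point where the hypothesis is used.
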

\begin{proof} It is an easy exercise.
\end{proof}

From now on, we assume that $E(|u|^2)\in L^{\infty}(\mathcal{A})$. Now we characterize the spectral radius algebra corresponding to the WCE operator $M_wEM_u$ in the next theorem.
\begin{thm}\label{t1}
Let $S\in \mathcal{B}(L^2(\mathcal{F}))$. Then $S\in \mathcal{B}_{M_wEM_u}$ if and only if $\mathcal{N}(EM_u)$ is invariant under $S$.
\end{thm}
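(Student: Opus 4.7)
The plan is to use the orthogonal decomposition $L^{2}(\mathcal{F})=\mathcal{N}(EM_{u})\oplus\mathcal{N}(EM_{u})^{\perp}$ together with the explicit formulas for $R_{m}$ and $R_{m}^{-1}$ derived above. The key preliminary observation is that $R_{m}$, and hence $R_{m}^{-1}$, fixes every vector of $\mathcal{N}(EM_{u})$: if $f\in\mathcal{N}(EM_{u})$ then $E(uf)=0$, so $M_{v_{m}\bar u}EM_{u}f=0$ and $R_{m}^{2}f=f$, whereupon positivity of the square root forces $R_{m}f=f$. On the complementary subspace $\mathcal{N}(EM_{u})^{\perp}=\overline{\mathcal{R}(M_{\bar u}E)}$, a direct computation for a typical element $\bar u g$ with $g\in L^{2}(\mathcal{A})$ gives
$$M_{v_{m}\bar u}EM_{u}(\bar u g)=v_{m}\bar u E(|u|^{2}g)=v_{m}E(|u|^{2})\,\bar u g,$$
so $R_{m}$ restricts there to multiplication by $h_{m}:=\sqrt{1+v_{m}E(|u|^{2})}$ and $R_{m}^{-1}$ to multiplication by $h_{m}^{-1}$. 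Relative to the decomposition, both $R_{m}$ and $R_{m}^{-1}$ are therefore block-diagonal.

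Writing $S$ in block form with entries $S_{ij}$ with respect to this decomposition, direct multiplication yields
$$R_{m}SR_{m}^{-1}=\begin{pmatrix}S_{11}&S_{12}M_{h_{m}^{-1}}\\ M_{h_{m}}S_{21}&M_{h_{m}}S_{22}M_{h_{m}^{-1}}\end{pmatrix},$$
so $S\in\mathcal{B}_{M_{w}EM_{u}}$ is equivalent to uniform boundedness in $m$ of each of the four entries. The necessity direction is then immediate from inspecting the lower-left block: because $d_{m}\uparrow 1/r(T)$ and $r(T)=\||E(uw)|\|_{\infty}$, the denominator $1-d_{m}^{2}|E(uw)|^{2}$ vanishes on the set where $|E(uw)|$ attains its essential supremum, so that $v_{m}$ and hence $h_{m}$ grow without bound there and $\|M_{h_{m}}\|\to\infty$. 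Uniform boundedness of $M_{h_{m}}S_{21}$ thus forces $S_{21}=0$, which is exactly the statement that $\mathcal{N}(EM_{u})$ is $S$-invariant.

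For the sufficiency direction one may assume $S_{21}=0$. The block $S_{11}$ is $m$-independent and $\|S_{12}M_{h_{m}^{-1}}\|\le\|S_{12}\|$ because $h_{m}\ge 1$. The main obstacle is controlling $\|M_{h_{m}}S_{22}M_{h_{m}^{-1}}\|$ uniformly in $m$, since $h_{m}$ is unbounded as $m\to\infty$. I would handle this by identifying $\mathcal{N}(EM_{u})^{\perp}$ with a weighted copy of $L^{2}(\mathcal{A})$ via $\bar u g\mapsto g$ (under which $M_{h_{m}}$ acts as ordinary multiplication by an $\mathcal{A}$-measurable function) and then using the explicit closed form $R_{m}=I+M_{\alpha_{m}\bar u}EM_{u}$, with $\alpha_{m}$ the $\mathcal{A}$-measurable solution of $2\alpha_{m}+\alpha_{m}^{2}E(|u|^{2})=v_{m}$, together with Lemma~\ref{l1} applied to $R_{m}SR_{m}^{-1}(R_{m}SR_{m}^{-1})^{*}$ to extract the cancellation between the $M_{h_{m}}$ and $M_{h_{m}^{-1}}$ factors that results from the rank-one-like action of $EM_{u}$ into $L^{2}(\mathcal{A})$.
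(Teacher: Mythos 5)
Your decomposition is exactly the one the paper uses: $L^2(\mathcal{F})=\mathcal{N}(EM_u)\oplus\overline{\bar uL^2(\mathcal{A})}$, with $R_m$ block diagonal, acting as the identity on $\mathcal{N}(EM_u)$ and as multiplication by $h_m=(1+v_mE(|u|^2))^{1/2}$ (the paper's $q_m^{1/2}$) on the complement, and with invariance of $\mathcal{N}(EM_u)$ encoded by the vanishing of the single off-diagonal block $M_{h_m}S_{21}$ (the paper's $M_{q_m^{1/2}}Y$). So the skeleton matches the paper's argument; the issue is that you do not finish it.

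The genuine gap is in the sufficiency direction. You correctly single out $\sup_m\|M_{h_m}S_{22}M_{h_m^{-1}}\|$ (the paper's $M_{q_m^{1/2}}XM_{q_m^{-1/2}}$) as the main obstacle, but what you offer there is a plan, not a proof: ``identifying $\mathcal{N}(EM_u)^\perp$ with a weighted $L^2(\mathcal{A})$ and using Lemma~\ref{l1} to extract the cancellation'' does not produce a bound, because $S_{22}$ is an arbitrary operator on that subspace and need not interact with $EM_u$ in any rank-one-like way. Note that for a general $X$ and a positive $\mathcal{A}$-measurable weight $h_m\geq1$ with $\|h_m\|_\infty\to\infty$, the inequality $\|M_{h_m}XM_{h_m^{-1}}\|\leq\|X\|$ fails (take $X=e_1\otimes e_2$ with $h_m$ large on the support of $e_1$ and equal to $1$ on the support of $e_2$); so some genuine control on the oscillation of $q_m$, e.g.\ uniform boundedness of ratios $q_m(x)/q_m(y)$ in $m$, is what is actually needed, and your sketch does not supply it. (The paper itself disposes of this step by asserting $\|M_{q_m^{1/2}}XM_{q_m^{-1/2}}\|\leq\|X\|$ without justification, so you have located a real pressure point of the argument, but you have not resolved it.) A smaller remark on necessity: deducing $S_{21}=0$ from $\sup_m\|M_{h_m}S_{21}\|<\infty$ requires $h_m\uparrow\infty$ pointwise on a set that the range of $S_{21}$ can charge; the set where $|E(uw)|$ attains its essential supremum may be null, in which case $h_m$ increases to an a.e.\ finite limit $h_\infty$ and monotone convergence only yields boundedness of $M_{h_\infty}S_{21}$, not $S_{21}=0$. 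This step therefore also needs an explicit argument (or an added hypothesis) rather than the appeal to $\|M_{h_m}\|\to\infty$ alone.
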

\begin{proof}
Since $R_m$ and $R^{-1}_m$ are positive operators and $(EM_u)^{\ast}=M_{\bar{u}}E$, then by  Lemma \ref{l1} and Theorem \ref{t2} we have
$$\|R_m\|^2=\|R^2_m\|=1+\|E(|u|^2)v_m\|_{\infty}$$
and
$$\|R^{-1}_m\|^2=\|R^{-2}_m\|=1+\|\frac{E(|u|^2)v_m}{v_mE(|u|^2)-1}\|_{\infty}.$$
If we decompose  $L^2(\mathcal{F})$ as a direct sum $\mathcal{H}_1\oplus \mathcal{H}_2$, in which
 $$\mathcal{H}_2=\mathcal{N}(EM_u)=\{f\in L^2(\mathcal{F}): E(uf)=0\}$$
and
 $$\mathcal{H}_1=\mathcal{H}^{\perp}_2=\overline{\bar{u}L^2(\mathcal{A})},$$
 then the corresponding block matrix of $R_m$ is
\begin{center}
$ R_m=\left(
   \begin{array}{cc}
     M_{(q_m)^{\frac{1}{2}}} & 0 \\
     0 & I \\ \ \ \ \ \
   \end{array}
 \right)
$
and
$\  R^{-1}_m=\left(
   \begin{array}{cc}
     M_{(q_m)^{\frac{-1}{2}}} & 0 \\
     0 & I \\
   \end{array}
 \right)
$,
\end{center}
where $q_m=1+v_mE(|u|^2)$. Notice that for $m>m'$ we have $q_m\geq q_{m'}$ and $\|q_m\|_{\infty}\rightarrow \infty$ as $m\rightarrow \infty$. If $S\in \mathcal{B}(L^2(\mathcal{F}))$ say
$S=\left(
  \begin{array}{cc}
    X & Y \\
    Z & W \\
  \end{array}
\right)$, the block matrix with respect to the decomposition $\mathcal{H}_1\oplus \mathcal{H}_2$, then
\begin{align*}
R_mSR^{-1}_m&=\left(
   \begin{array}{cc}
     M_{(q_m)^{\frac{1}{2}}} & 0 \\
     0 & I \\
   \end{array}
 \right)\left(
  \begin{array}{cc}
    X & Y \\
    Z & W \\
  \end{array}
\right)\left(
   \begin{array}{cc}
     M_{(q_m)^{\frac{-1}{2}}} & 0 \\
     0 & I \\
   \end{array}
 \right)\\
 &=
\left(
   \begin{array}{cc}
     M_{(q_m)^{\frac{1}{2}}}XM_{(q_m)^{\frac{-1}{2}}} & M_{(q_m)^{\frac{1}{2}}}Y  \\
     ZM_{(q_m)^{\frac{-1}{2}}} & W\\
   \end{array}
 \right).
 \end{align*}
 Since $\|M_{(q_m)^{\frac{1}{2}}}XM_{(q_m)^{\frac{-1}{2}}}\|\leq\|X\|$, then we get that $\sup_m\|R_mSR^{-1}_m\|<\infty$ if and only if $\sup_m\|M_{(q_m)^{\frac{1}{2}}}Y\|<\infty$. Direct computations shows that $\sup_m\|M_{(q_m)^{\frac{1}{2}}}Y\|<\infty$ if and only if $Y=0$. This means that $\mathcal{H}_2$ is an invariant subspace for $S$.
\end{proof}

Therefore by Theorem \ref{t1} we get that there are many different operators that have the same spectral radius algebra.

\begin{cor} Let $w, w',u\in \mathcal{D}(E)$. If $M_wEM_u$ and $M_{w'}EM_u$ are bounded operator on the Hilbert space $L^2(\mathcal{F})$, then $\mathcal{B}_{M_{w'}EM_u}=\mathcal{B}_{M_wEM_u}$.
\end{cor}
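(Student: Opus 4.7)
The plan is to derive this corollary as an immediate consequence of Theorem \ref{t1}. Recall that the standing assumption of this section is $E(|u|^2)\in L^{\infty}(\mathcal{A})$, which depends only on $u$ and is therefore available for both WCE operators under consideration. Similarly, the boundedness of $M_wEM_u$ and of $M_{w'}EM_u$ is guaranteed by hypothesis, so Theorem \ref{t1} is applicable to each.

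The key observation is that the characterization produced by Theorem \ref{t1},
\[
S\in\mathcal{B}_{M_wEM_u}\iff \mathcal{N}(EM_u)\text{ is invariant under }S,
\]
depends on the weight $w$ only through auxiliary quantities (the spectral radius $r(M_wEM_u)$, the function $v_m$, and the scalar $q_m=1+v_mE(|u|^2)$) that control the norms $\|R_m\|$ and $\|R_m^{-1}\|$ but not the decomposition $L^2(\mathcal{F})=\overline{\bar uL^2(\mathcal{A})}\oplus\mathcal{N}(EM_u)$ governing the block structure. Replacing $w$ by $w'$ yields a different $q_m$, but the same subspace $\mathcal{N}(EM_u)$, and hence the same invariance condition.

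Thus I would apply Theorem \ref{t1} twice, once to $T=M_wEM_u$ and once to $T'=M_{w'}EM_u$, and observe that both sides of the resulting equivalence involve exactly the same subspace $\mathcal{N}(EM_u)$. This gives
\[
S\in\mathcal{B}_{M_wEM_u}\iff S\mathcal{N}(EM_u)\subseteq\mathcal{N}(EM_u)\iff S\in\mathcal{B}_{M_{w'}EM_u},
\]
so the two spectral radius algebras coincide. Since all the real work has been done inside Theorem \ref{t1}, there is no genuine obstacle here; the only point worth checking is that the standing assumption on $u$ and the boundedness hypotheses suffice to invoke Theorem \ref{t1} for both operators, which they do.
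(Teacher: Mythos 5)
Your argument is correct and is precisely the paper's intended justification: the corollary appears immediately after Theorem \ref{t1} with the remark that it follows from that theorem, since the characterization $S\in\mathcal{B}_{M_wEM_u}\iff S\mathcal{N}(EM_u)\subseteq\mathcal{N}(EM_u)$ involves only $u$ and not $w$. Your additional check that the standing assumption $E(|u|^2)\in L^{\infty}(\mathcal{A})$ and the boundedness hypotheses apply to both operators is a sensible (if routine) precaution that the paper leaves implicit.
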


Also in the next corollary we have a sufficient condition for $\mathcal{B}_{M_wEM_u}$ to be equal to $\mathcal{B}(L^2(\mathcal{F}))$.

\begin{cor} If $\mathcal{N}(EM_u)=\{0\}$, then $\mathcal{B}_{M_wEM_u}=\mathcal{B}(L^2(\mathcal{F}))$.
\end{cor}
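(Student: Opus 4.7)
The plan is to invoke Theorem \ref{t1} directly. By that theorem, an operator $S\in\mathcal{B}(L^2(\mathcal{F}))$ lies in $\mathcal{B}_{M_wEM_u}$ precisely when the subspace $\mathcal{N}(EM_u)$ is invariant under $S$. So, under the hypothesis $\mathcal{N}(EM_u)=\{0\}$, the invariance condition becomes $S\{0\}\subseteq\{0\}$, which is satisfied by every bounded operator. Hence the characterization automatically forces $\mathcal{B}_{M_wEM_u}=\mathcal{B}(L^2(\mathcal{F}))$.

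Concretely, I would structure the proof in one short paragraph: assume $S\in\mathcal{B}(L^2(\mathcal{F}))$ is arbitrary; observe that $\mathcal{H}_2=\mathcal{N}(EM_u)=\{0\}$ trivially satisfies $S\mathcal{H}_2\subseteq\mathcal{H}_2$; apply Theorem \ref{t1} to conclude $S\in\mathcal{B}_{M_wEM_u}$; and note the reverse inclusion $\mathcal{B}_{M_wEM_u}\subseteq\mathcal{B}(L^2(\mathcal{F}))$ is automatic from the definition of the spectral radius algebra.

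There is no real obstacle here: the content of the statement is entirely contained in the previously established Theorem \ref{t1}, and the corollary simply extracts the degenerate case where the decomposition $L^2(\mathcal{F})=\mathcal{H}_1\oplus\mathcal{H}_2$ used in the proof of Theorem \ref{t1} collapses to $\mathcal{H}_1=L^2(\mathcal{F})$. One could optionally remark, for the reader's intuition, that in this situation the block matrix of any $S$ has no off-diagonal block $Y$ to control, so the uniform boundedness condition $\sup_m\|R_mSR_m^{-1}\|<\infty$ reduces to the condition $\|M_{q_m^{1/2}}XM_{q_m^{-1/2}}\|\leq\|X\|$ already shown in the proof of Theorem \ref{t1}, which holds trivially.
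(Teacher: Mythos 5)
Your proof is correct and is exactly the intended argument: the paper states this as an immediate corollary of Theorem \ref{t1} without further proof, and your observation that the invariance of $\mathcal{N}(EM_u)=\{0\}$ under any bounded $S$ is automatic is precisely the one-line deduction the authors have in mind.
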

In the next Proposition we find some special elements of $\mathcal{B}_{M_wEM_u}$.
\begin{prop}\label{p111} If $a\in L^0(\mathcal{A})$ such that $a\geq0$ and $M_{a\bar{u}}EM_u\in \mathcal{B}(L^2(\mathcal{F}))$, then $M_{a\bar{u}}EM_u\in\mathcal{B}_{M_wEM_u}$.
\end{prop}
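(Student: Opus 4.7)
The plan is to reduce this directly to Theorem \ref{t1}, which says that membership in $\mathcal{B}_{M_wEM_u}$ is equivalent to leaving $\mathcal{N}(EM_u)$ invariant. So instead of working with the defining condition $\sup_m\|R_m S R_m^{-1}\|<\infty$, I will simply verify that $S:=M_{a\bar{u}}EM_u$ sends $\mathcal{N}(EM_u)$ into itself.

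The key observation is that $S$ has an even stronger property: it annihilates $\mathcal{N}(EM_u)$ entirely. For if $f\in L^2(\mathcal{F})$ with $E(uf)=0$, then a direct computation gives
\[
Sf \;=\; M_{a\bar{u}}E(M_u f) \;=\; a\bar{u}\,E(uf) \;=\; 0,
\]
which certainly lies in $\mathcal{N}(EM_u)$. Hence $S(\mathcal{N}(EM_u))\subseteq\mathcal{N}(EM_u)$, and Theorem \ref{t1} yields $S\in\mathcal{B}_{M_wEM_u}$.

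There is essentially no obstacle here, since the hypotheses that $a\in L^0(\mathcal{A})$, $a\geq 0$, and $M_{a\bar{u}}EM_u\in\mathcal{B}(L^2(\mathcal{F}))$ are used only to ensure that $S$ is a well-defined bounded operator on $L^2(\mathcal{F})$ so that Theorem \ref{t1} is applicable; the invariance step itself is immediate from the factorization $S=M_{a\bar{u}}\circ(EM_u)$, which makes $\mathcal{N}(EM_u)$ automatically contained in $\mathcal{N}(S)$.
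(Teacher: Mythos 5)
Your proof is correct, but it takes a genuinely different route from the paper's. The paper does not invoke Theorem \ref{t1} at all: it argues directly from the definition of $\mathcal{B}_{M_wEM_u}$ by showing that $S=M_{a\bar{u}}EM_u$ \emph{commutes} with $R_m=\bigl(I+M_{v_m\bar{u}}EM_u\bigr)^{\frac{1}{2}}$ (this is where the hypothesis $a\in L^0(\mathcal{A})$ is genuinely used, since $\mathcal{A}$-measurability of $a$ and $v_m$ is what makes $M_{a\bar{u}}EM_u$ and $M_{v_m\bar{u}}EM_u$ commute), whence $\|R_mSR_m^{-1}\|=\|S\|$ for every $m$. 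That argument yields the stronger quantitative conclusion that the supremum equals $\|S\|$ exactly, and it is self-contained. Your argument instead observes that $S$ annihilates $\mathcal{N}(EM_u)$, so that subspace is trivially invariant, and then cites the characterization in Theorem \ref{t1}. This is shorter given that theorem, and it actually proves more in a different direction: as you note, neither $a\geq 0$ nor $a\in L^0(\mathcal{A})$ is needed for your argument — any $a$ for which $M_{a\bar{u}}EM_u$ is bounded works, since $Sf=a\bar{u}E(uf)$ vanishes on $\mathcal{N}(EM_u)$ regardless. (In fact, combined with Theorem \ref{t3}, your observation $\mathcal{N}(EM_u)\subseteq\mathcal{N}(S)$ would place $S$ in $Q_{M_wEM_u}$ whenever that smaller class is relevant, whereas the paper's commutation identity shows the norms $\|R_mSR_m^{-1}\|$ do not tend to zero unless $S=0$; these two conclusions are consistent only because $Q_T$ requires membership plus decay, and the decay comes from the $Z M_{(q_m)^{-1/2}}$ corner — worth being aware of, though it does not affect the correctness of your proof of this proposition.) Both proofs rely on the standing assumptions of the section, in particular $E(|u|^2)\in L^{\infty}(\mathcal{A})$ and the boundedness of $M_wEM_u$, which you correctly flag as the role of the remaining hypotheses.
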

\begin{proof} Since $R_m=\left(I+M_{v_m\bar{u}}EM_u\right)^{\frac{1}{2}}$ and $v_m=\frac{d^{2}_mE(|w|^2)}{1-d^2_m|E(uw)|^2}$ is an $\mathcal{A}$-measurable function, it holds that $R_m M_{a\bar{u}}EM_u=M_{a\bar{u}}EM_uR_m$. Therefore we have
$\|R_m M_{a\bar{u}}EM_u R^{-1}_m\|=\|M_{a\bar{u}}EM_u\|$, and so we get that $M_{a\bar{u}}EM_u\in\mathcal{B}_{M_wEM_u}$.
\end{proof}

Every operator $T$ on a Hilbert
space $\mathcal{H}$ can be decomposed into $T = U|T|$ with a
partial isometry $U$, where $|T| = (T^*T)^{\frac{1}{2}}$. $U$ is
determined uniquely by the kernel condition $\mathcal{N}(U) =
\mathcal{N}(|T|)$. Then this decomposition is called the polar
decomposition. The Aluthge transformation $\widetilde{T}$ of the
operator $T$ is defined by
$\widetilde{T}=|T|^{\frac{1}{2}}U|T|^{\frac{1}{2}}$.
Here we recall that the Aluthge transformation of
$T=M_wEM_u$ is
$$\widetilde{T}(f)=\frac{\chi_{z_1}E(uw)}{E(|u|^{2})}\bar{u}E(uf), \ \ \ \ \ \  \ \ \ \  \ \  \ f\in L^{2}(\mathcal{F}),$$
in which $z_1=z(E(|u|^2))$ (see \cite{ej}). Thus $\widetilde{T}=M_{w'}EM_{u'}$ where $w'=\frac{E(uw)\bar{u}\chi_{z_1}}{E(|u|^2)}$ and $u'=u$. We recall that $r(M_wEM_u)=\|E(uw)\|_{\infty}$ (see \cite{e}). Direct computations shows that $E(u'w')=E(uw)$. Hence $r(T)=r(\widetilde{T})$. Hence by using  Proposition \ref{p111} we have the next corollary.
\begin{cor} If $w$ and $u$ are positive measurable functions, then $\widetilde{T}\in \mathcal{B}_{T}$ where $T=M_wEM_u$.
\end{cor}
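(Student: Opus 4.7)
The plan is a direct application of Proposition \ref{p111}. Recall that the Aluthge transform is given explicitly by
\[
\widetilde T(f)=\frac{\chi_{z_1}E(uw)}{E(|u|^2)}\,\bar u\,E(uf),
\]
so setting $a:=\chi_{z_1}E(uw)/E(|u|^2)$ rewrites $\widetilde T$ as $M_{a\bar u}EM_u$, exactly the template to which Proposition \ref{p111} applies.

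The only thing left is to verify the hypotheses of Proposition \ref{p111} for this particular $a$. First, $a\in L^0(\mathcal{A})$: the functions $E(uw)$ and $E(|u|^2)$ are $\mathcal{A}$-measurable by the definition of conditional expectation, and $\chi_{z_1}$ is $\mathcal{A}$-measurable because $z_1=z(E(|u|^2))$ is the support of an $\mathcal{A}$-measurable function; the cutoff by $\chi_{z_1}$ also removes any ambiguity where the denominator vanishes. Second, $a\geq 0$: the positivity hypothesis on $u$ and $w$ gives $uw\geq 0$ and $|u|^2\geq 0$, hence $E(uw)\geq 0$ and $E(|u|^2)\geq 0$. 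Third, $M_{a\bar u}EM_u=\widetilde T$ is bounded on $L^2(\mathcal{F})$, since the Aluthge transform of a bounded operator is always bounded (indeed $\|\widetilde T\|\leq\|T\|$).

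With all three conditions satisfied, Proposition \ref{p111} yields $M_{a\bar u}EM_u\in \mathcal{B}_{M_wEM_u}$, i.e.\ $\widetilde T\in \mathcal{B}_T$, which finishes the proof. There is no real obstacle here, as the substantive work has been done in Proposition \ref{p111}; the essential point is that the scalar coefficient $E(uw)/E(|u|^2)$ appearing in $\widetilde T$ is $\mathcal{A}$-measurable, which is precisely what lets it commute with the $R_m$'s in the proof of that proposition. The positivity of $u$ and $w$ is used only to guarantee the sign condition $a\geq 0$.
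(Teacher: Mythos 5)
Your proof is correct and follows exactly the paper's route: the paper also identifies $\widetilde{T}=M_{a\bar u}EM_u$ with $a=\chi_{z_1}E(uw)/E(|u|^2)$, notes that positivity of $u$ and $w$ gives $a\geq 0$, and invokes Proposition \ref{p111}. Your write-up is in fact more explicit than the paper's one-line justification, as you carefully check the $\mathcal{A}$-measurability and boundedness hypotheses.
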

By the proof of Proposition \ref{p111} we get that the commutant of $M_wEM_u$ (in symbol $\{M_wEM_u\}'$) is a proper subset of $\mathcal{B}_{M_wEM_u}$ when $w, u$ are positive and $w\neq u$. In the next theorem we get that $\mathcal{B}_{T}=\mathcal{B}_{\widetilde{T}}$ when $T=M_wEM_u$ and $w,u\geq0$.
\begin{cor} If $T=M_wEM_u$ and $w,u\geq0$, then $\mathcal{B}_{T}=\mathcal{B}_{\widetilde{T}}$.
\end{cor}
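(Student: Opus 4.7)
The strategy is to reduce the assertion directly to Theorem \ref{t1}, exploiting the fact that Theorem \ref{t1} characterizes $\mathcal{B}_{M_wEM_u}$ via a condition that depends only on $u$, namely the invariance of $\mathcal{N}(EM_u)$. Since the Aluthge transform of $T$ is (as computed earlier in the paper) again a WCE operator with the same right multiplier $u$, the two algebras $\mathcal{B}_T$ and $\mathcal{B}_{\widetilde T}$ are forced to coincide.

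First, I would recall the explicit form $\widetilde T=M_{w'}EM_{u'}$ with $u'=u$ and $w'=\dfrac{E(uw)\bar u\chi_{z_1}}{E(|u|^2)}$ exhibited just above the corollary to be proved. The decisive observation is that $u'=u$, hence
\[
\mathcal{N}(EM_{u'})=\mathcal{N}(EM_u).
\]
With this in hand, I would next check that $\widetilde T$ actually falls within the hypotheses of Theorem \ref{t1}: $\widetilde T$ is bounded on $L^2(\mathcal F)$ because the Aluthge transform satisfies $\|\widetilde T\|\le\|T\|$, and the standing assumption $E(|u|^2)\in L^\infty(\mathcal A)$ is shared by both operators. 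Boundedness combined with Theorem \ref{t2} then forces $E(|w'|^2)^{1/2}E(|u|^2)^{1/2}\in L^\infty(\mathcal A)$, so that $w'\in\mathcal{D}(E)$ and all formal requirements are met.

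Once these verifications are in place, the proof is immediate: by Theorem \ref{t1}, $\mathcal{B}_T$ is exactly the set of $S\in\mathcal{B}(L^2(\mathcal F))$ leaving $\mathcal{N}(EM_u)$ invariant, and the same theorem applied to $\widetilde T=M_{w'}EM_u$ gives that $\mathcal{B}_{\widetilde T}$ is the set of $S$ leaving $\mathcal{N}(EM_u)$ invariant. The two sets coincide. (Equivalently, one could quote the first corollary after Theorem \ref{t1} with the pair $w,w'$.)

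There is no real obstacle here, but the one point that must be addressed with care is confirming that $\widetilde T$ genuinely lies in the class of WCE operators to which Theorem \ref{t1} applies, i.e., that $w'\in\mathcal{D}(E)$ and the boundedness condition from Theorem \ref{t2} holds for $M_{w'}EM_u$. The hypothesis $w,u\ge 0$ plays an auxiliary role: it ensures that the preceding corollary (on $\widetilde T\in\mathcal{B}_T$) is applicable and that $w'$ has the natural form $a\bar u$ with $a=\tfrac{E(uw)\chi_{z_1}}{E(|u|^2)}\ge 0$ an $\mathcal{A}$-measurable function, so that no sign difficulty arises in the identification $\widetilde T=M_{w'}EM_u$.
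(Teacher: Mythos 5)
Your proposal is correct and follows essentially the route the paper intends: since $\widetilde{T}=M_{w'}EM_{u}$ has the same right symbol $u$, Theorem \ref{t1} (equivalently the corollary immediately after it, applied to the pair $w,w'$) characterizes both $\mathcal{B}_{T}$ and $\mathcal{B}_{\widetilde{T}}$ as the operators leaving $\mathcal{N}(EM_u)$ invariant, so the two algebras coincide. The extra verifications you include (boundedness of $\widetilde{T}$ and $w'\in\mathcal{D}(E)$) are sensible housekeeping that the paper leaves implicit.
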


Recall that for $f, g\in L^2(\mathcal{F})$ we can define a rank one operator $f\otimes g$ on $L^2(\mathcal{F})$ by the action $(f\otimes g)(h)=\langle h, g\rangle f$ for every $h\in L^2(\mathcal{F})$, in which $\langle\ ,\ \rangle$ is the inner product of the Hilbert space $L^2(\mathcal{F})$.
In the next proposition we give some conditions under which a rank one operator belongs to the $\mathcal{B}_{M_wEM_u}$.
\begin{prop} If $T=M_wEM_u$ and $f,g\in L^2(\mathcal{F})$, then $f\otimes g\in \mathcal{B}_T$ if and only if
$$\sup_m\|\alpha_m^{\frac{1}{2}}E(ug)\|^2\|f\|^2+\|v_m^{\frac{1}{2}}E(uf)\|^2(\|g\|^2+\|\alpha_m^{\frac{1}{2}}E(ug)\|^2)<\infty,$$
where $\alpha_m=\frac{v_m}{v_mE(|u|^2)-1}$.
\end{prop}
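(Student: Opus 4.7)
The plan is to exploit the fact that both $R_m$ and $R_m^{-1}$ are positive (hence self-adjoint) operators, so conjugating a rank-one operator by them again produces a rank-one operator whose norm factors as a product of two vector norms. The condition $\sup_m \|R_m (f \otimes g) R_m^{-1}\| < \infty$ then becomes a concrete inequality involving $\|R_m f\|$ and $\|R_m^{-1} g\|$, and these norms can be computed explicitly from the formulas for $R_m$ and $R_m^{-1}$ already derived in this section.

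First I would observe that for any positive invertible $R$ on $L^2(\mathcal{F})$,
\begin{equation*}
R(f \otimes g) R^{-1}(h) = \langle R^{-1}h, g\rangle R f = \langle h, R^{-1} g \rangle R f,
\end{equation*}
so $R(f \otimes g) R^{-1} = (Rf) \otimes (R^{-1} g)$. Applying this with $R = R_m$ and using $\|a \otimes b\| = \|a\| \, \|b\|$ yields
\begin{equation*}
\|R_m (f \otimes g) R_m^{-1}\|^2 = \|R_m f\|^2 \, \|R_m^{-1} g\|^2.
\end{equation*}

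Next, from the formula $R_m^2 = I + M_{v_m \bar{u}} E M_u$ I would compute
\begin{equation*}
\|R_m f\|^2 = \langle R_m^2 f, f\rangle = \|f\|^2 + \int v_m \bar{u}\, E(uf)\, \bar{f}\, d\mu.
\end{equation*}
Since $v_m$ is $\mathcal{A}$-measurable and $E(uf)$ is $\mathcal{A}$-measurable, the conditional expectation can be pulled inside to give $E(v_m \bar{u}\, E(uf)\, \bar{f}) = v_m\, E(uf)\, E(\bar{u}\bar{f}) = v_m |E(uf)|^2$, so
\begin{equation*}
\|R_m f\|^2 = \|f\|^2 + \|v_m^{\frac{1}{2}} E(uf)\|^2.
\end{equation*}
Using the analogous formula $R_m^{-2} = I + M_{\alpha_m \bar{u}} E M_u$ (with $\alpha_m = v_m/(v_m E(|u|^2) - 1)$, as noted earlier in the section) the same calculation gives
\begin{equation*}
\|R_m^{-1} g\|^2 = \|g\|^2 + \|\alpha_m^{\frac{1}{2}} E(ug)\|^2.
\end{equation*}

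Finally I would multiply the two expressions and expand:
\begin{equation*}
\|R_m(f\otimes g)R_m^{-1}\|^2 = \|f\|^2\|g\|^2 + \|\alpha_m^{\frac{1}{2}} E(ug)\|^2 \|f\|^2 + \|v_m^{\frac{1}{2}} E(uf)\|^2\bigl(\|g\|^2 + \|\alpha_m^{\frac{1}{2}} E(ug)\|^2\bigr).
\end{equation*}
The constant term $\|f\|^2 \|g\|^2$ is bounded independently of $m$, so $\sup_m \|R_m(f\otimes g)R_m^{-1}\| < \infty$ is equivalent to boundedness of the remaining sum, which is precisely the stated condition. The only non-cosmetic step is the conditional-expectation manipulation, and that is routine given the $\mathcal{A}$-measurability of $v_m$ and $\alpha_m$; I do not foresee a genuine obstacle.
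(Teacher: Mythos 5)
Your argument is correct and essentially the same as the paper's: the paper computes exactly the two quantities $\|R_mf\|^2$ and $\|R_m^{-1}g\|^2$ and then invokes Lemma 3.9 of the Lambert--Petrovic reference, whose content is precisely your identity $R_m(f\otimes g)R_m^{-1}=(R_mf)\otimes(R_m^{-1}g)$ together with $\|a\otimes b\|=\|a\|\,\|b\|$. The only difference is that you re-derive that lemma from the self-adjointness of $R_m^{\pm 1}$, making the proof self-contained; note that, like the paper's proof, your computation of $\|R_m^{-1}g\|^2$ rests on the section's formula $R_m^{-2}=I+M_{\alpha_m\bar u}EM_u$ with $\alpha_m=\frac{v_m}{v_mE(|u|^2)-1}$, which you (reasonably) take on faith rather than verify.
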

\begin{proof} By using the properties of inner product we have
$$\|R_m{f}\|^2=\|f\|^2+\|v_m^{\frac{1}{2}}E(uf)\|^2$$
and
$$\|R^{-1}_m{f}\|^2=\|g\|^2+\|\alpha_m^{\frac{1}{2}}E(ug)\|^2.$$
Now, the desired conclusion follows by \cite[Lemma 3.9]{lp}.
\end{proof}

By using some results of \cite{lp} we get that the conditional expectation corresponding to $\sigma$- subalgebra $\mathcal{A}\subseteq\mathcal{B}$ are in $\mathcal{B}_{E^{\mathcal{A}}M_u}$ as we mentioned in the next remark.
\begin{rem} Let $T=EM_u\in \mathcal{B}(L^2(\mathcal{F}))$, $u\in L^{\infty}(\mathcal{A})$ and let $\mathcal{A}, \mathcal{B}$ be $\sigma$-subalgebras of $\mathcal{F}$ such that $\mathcal{A}\subseteq \mathcal{B}$. If $E=E^{\mathcal{A}}$ and $S$ is an operator for which $TS=E^{\mathcal{B}}ST$, then $S\in \mathcal{B}_{T}$.
\end{rem}
\begin{proof}
It is not hard to see that $EM_uE^{\mathcal{B}}=E^{\mathcal{B}}EM_u$. Since $E^{\mathcal{B}}$ is a projection on $L^2(\mathcal{F})$, then it is power bounded. Therefore, by \cite[Proposition 2.3]{lp} we get that $S\in \mathcal{B}_{T}$.
\end{proof}

\begin{cor}\label{c11} If $T=M_wEM_u\in \mathcal{B}(L^2(\mathcal{F}))$ and $a\in L^{\infty}(\mathcal{A})$, then $M_a\in \mathcal{B}_{T}$.
\end{cor}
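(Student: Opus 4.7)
The plan is to apply Theorem \ref{t1} directly. By that theorem, an operator $S \in \mathcal{B}(L^2(\mathcal{F}))$ belongs to $\mathcal{B}_{M_wEM_u}$ if and only if the subspace $\mathcal{N}(EM_u) = \{f \in L^2(\mathcal{F}) : E(uf) = 0\}$ is invariant under $S$. So the entire task reduces to checking that $M_a$ leaves $\mathcal{N}(EM_u)$ invariant.

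First I would note that $M_a$ is a bounded operator on $L^2(\mathcal{F})$ because $a \in L^\infty(\mathcal{A}) \subseteq L^\infty(\mathcal{F})$. Next, take an arbitrary $f \in \mathcal{N}(EM_u)$, so that $E(uf) = 0$. I want to show $M_a f = af$ also lies in $\mathcal{N}(EM_u)$, i.e.\ that $E(u \cdot af) = 0$. Since $a$ is $\mathcal{A}$-measurable and bounded, it pulls out of the conditional expectation: $E(a \cdot uf) = a \, E(uf) = a \cdot 0 = 0$. Hence $M_a f \in \mathcal{N}(EM_u)$, so $\mathcal{N}(EM_u)$ is invariant under $M_a$, and Theorem \ref{t1} delivers $M_a \in \mathcal{B}_T$.

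There is essentially no obstacle here; the only subtlety is the boundedness of $M_a$ and the pull-out property of $E$ for $\mathcal{A}$-measurable factors, both of which are standard. One could alternatively try to route the argument through the preceding remark by verifying the relation $TM_a = M_a T$ (which holds because $a$ is $\mathcal{A}$-measurable, using $M_a E = E M_a$ and commutativity of $M_a$ with $M_w, M_u$ in the scalar sense under $E$), but the invariant-subspace characterization in Theorem \ref{t1} gives the cleanest and shortest route.
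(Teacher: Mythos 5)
Your proof is correct: by Theorem \ref{t1} membership in $\mathcal{B}_{M_wEM_u}$ is exactly invariance of $\mathcal{N}(EM_u)$, and the pull-out property $E(auf)=aE(uf)$ for $\mathcal{A}$-measurable bounded $a$ makes that invariance immediate. The paper, however, states this as a corollary of the preceding remark, so its intended route is the intertwining/commutation one rather than the invariant-subspace criterion: since $a$ is $\mathcal{A}$-measurable, $M_wEM_uM_af = wE(uaf) = awE(uf) = M_aM_wEM_uf$, so $M_a$ actually lies in the commutant $\{T\}'$, which is contained in $\mathcal{B}_T$ by the general theory (no appeal to Theorem \ref{t1} or to \cite[Proposition 2.3]{lp} is even needed). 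You gesture at this alternative yourself at the end. The two arguments cost about the same; yours has the mild advantage of only needing invariance of the kernel (a weaker property than commutation) and so would survive if one weakened the hypotheses, while the commutant argument is more elementary in that it bypasses the block-matrix characterization of Theorem \ref{t1} entirely and works verbatim for any $T$ whose commutant contains $M_a$. The one hypothesis you should flag explicitly is that Theorem \ref{t1} is proved under the standing assumption $E(|u|^2)\in L^{\infty}(\mathcal{A})$; the commutant route does not need it.
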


Let $\mathcal{H}$ be a Hilbert space and $T\in \mathcal{B}(\mathcal{H})$. Here we recall the definition of $Q_T$, that is defined in \cite{lp}, as follows:
$$Q_T=\{S\in \mathcal{B}(\mathcal{H}):\|R_mSR^{-1}_m\|\rightarrow0\}.$$
In the next theorem we illustrate $Q_T$ when $T=M_wEM_u\in \mathcal{B}(L^2(\mathcal{F}))$.

\begin{thm}\label{t3}
Let $T=M_wEM_u$ and $S\in\mathcal{B}(L^2(\mathcal{F}))$. Then $S\in Q_T$ if and only if   $\mathcal{N}(EM_u)$ is invariant under $S$ and $\mathcal{N}(EM_u)\subseteq \mathcal{N}(S)$.
\end{thm}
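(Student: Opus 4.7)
The plan is to reuse, essentially verbatim, the orthogonal decomposition and block-matrix calculations from the proof of Theorem \ref{t1}. I decompose $L^2(\mathcal{F})=\mathcal{H}_1\oplus\mathcal{H}_2$ with $\mathcal{H}_2=\mathcal{N}(EM_u)$ and $\mathcal{H}_1=\overline{\bar{u}L^2(\mathcal{A})}$, write $S$ as a $2\times 2$ block matrix $\begin{pmatrix}X & Y\\ Z & W\end{pmatrix}$ with respect to this decomposition, and use the formula already established in the proof of Theorem \ref{t1},
\[
R_mSR_m^{-1}=\begin{pmatrix}M_{q_m^{1/2}}XM_{q_m^{-1/2}} & M_{q_m^{1/2}}Y \\ ZM_{q_m^{-1/2}} & W\end{pmatrix},
\]
where $q_m=1+v_mE(|u|^2)$. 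Since the operator norm of a block matrix dominates the norm of each block, the condition $\|R_mSR_m^{-1}\|\to 0$ forces each of the four blocks to vanish in norm; I then extract information from each block separately.

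For the forward direction, since $Q_T\subseteq\mathcal{B}_T$, Theorem \ref{t1} immediately gives $Y=0$, which is the invariance of $\mathcal{N}(EM_u)$ under $S$. The $(2,2)$ block is the $m$-independent operator $W$; its norm tending to zero therefore forces $W=0$. Combined, $Y=W=0$ is exactly the statement that $S$ annihilates $\mathcal{H}_2$, i.e.\ $\mathcal{N}(EM_u)\subseteq\mathcal{N}(S)$.

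For the converse, the hypotheses give $Y=W=0$, so only the $(1,1)$ and $(2,1)$ blocks $M_{q_m^{1/2}}XM_{q_m^{-1/2}}$ and $ZM_{q_m^{-1/2}}$ remain. I would exploit that $\|q_m\|_\infty\to\infty$ as $m\to\infty$ (already observed in Theorem \ref{t1}) together with the $\mathcal{A}$-measurability of $q_m$ and the structure $\mathcal{H}_1=\overline{\bar{u}L^2(\mathcal{A})}$, to argue that $M_{q_m^{-1/2}}$ acts on $\mathcal{H}_1$ as a multiplication whose operator norm tends to zero. This immediately kills the $(2,1)$ block $ZM_{q_m^{-1/2}}$, and the same decay propagated through the commuting multiplication structure should kill the $(1,1)$ similarity as well.

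The main obstacle is precisely the $(1,1)$ block: the similarity $M_{q_m^{1/2}}XM_{q_m^{-1/2}}$ leaves the spectrum of $X$ invariant, so it is not automatic that its operator norm vanishes just because one factor grows and the other shrinks. The heart of the proof is to show that on $\mathcal{H}_1$ this similarity really does decay, by writing it out on a dense set of vectors $\bar{u}a\in\mathcal{H}_1$ and using the pointwise inverse relation between $M_{q_m^{1/2}}$ and $M_{q_m^{-1/2}}$ on this $\mathcal{A}$-measurable subspace to reduce the estimate to the single contractive factor $M_{q_m^{-1/2}}|_{\mathcal{H}_1}$.
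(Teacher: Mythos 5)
Your forward direction reproduces the paper's argument exactly and is correct: $Y=0$ follows from $Q_T\subseteq\mathcal{B}_T$ together with Theorem \ref{t1}, and the $m$-independence of the $(2,2)$ block forces $W=0$; together $Y=W=0$ says $S$ annihilates $\mathcal{N}(EM_u)$.

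The converse, however, contains a genuine gap, and you have correctly located it in the $(1,1)$ block --- but the repair you sketch cannot work, because the obstacle is not technical. Since $1\le q_m\le\|q_m\|_\infty<\infty$, the operators $M_{q_m^{\pm1/2}}$ restrict to bounded invertible operators on $\mathcal{H}_1$, so $M_{q_m^{1/2}}XM_{q_m^{-1/2}}$ is similar to $X$ and therefore
\[
\bigl\|M_{q_m^{1/2}}XM_{q_m^{-1/2}}\bigr\|\;\ge\; r\bigl(M_{q_m^{1/2}}XM_{q_m^{-1/2}}\bigr)\;=\;r(X),
\]
which tends to zero only if $X$ is quasinilpotent --- a condition not implied by the hypotheses. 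Concretely, take $S=I-P$ with $P=P_{\mathcal{N}(EM_u)}$ (and $EM_u\neq0$, so that $\mathcal{H}_1\neq\{0\}$): then $\mathcal{N}(EM_u)$ is invariant under $S$ and contained in $\mathcal{N}(S)$, yet $R_m$ is block diagonal, so $R_mSR_m^{-1}=S$ and $\|R_mSR_m^{-1}\|=1$ for every $m$. Hence the converse implication is false as stated, and no density argument on $\bar{u}L^2(\mathcal{A})$ can rescue it. (A secondary issue: even the decay of the $(2,1)$ block $ZM_{q_m^{-1/2}}$ needs $q_m\to\infty$ uniformly on the support of $E(|u|^2)$, which does not follow from $\|q_m\|_\infty\to\infty$; where $E(|w|^2)$ vanishes one has $q_m=1$.) For comparison, the paper's own proof of this direction makes precisely the leap you were suspicious of, deducing $\|R_mSR_m^{-1}\|\to0$ from $\|M_{q_m^{-1/2}}\|\to0$ alone without controlling the $(1,1)$ similarity; your explicit flagging of the obstacle is the more accurate account of where the argument actually stands.
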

\begin{proof} Let
$S=\left(
  \begin{array}{cc}
    X & Y \\
    Z & W \\
  \end{array}
\right)$, the block matrix with respect to the decomposition $\mathcal{H}_1\oplus \mathcal{H}_2$, in which $\mathcal{H}_2=\mathcal{N}(EM_u)$ and $\mathcal{H}_1=\mathcal{H}^{\perp}_2$. So similar to the proof of Theorem \ref{t1} we have
$$R_mSR^{-1}_m=
\left(
   \begin{array}{cc}
     M_{(q_m)^{\frac{1}{2}}}XM_{(q_m)^{\frac{-1}{2}}} & M_{(q_m)^{\frac{1}{2}}}Y  \\
     ZM_{(q_m)^{\frac{-1}{2}}} & W\\
   \end{array}
 \right).$$

 Hence $\|W\|\leq \|R_mSR^{-1}_m\|$. Since $S\in Q_T$, then $Y=0$ and $\|W\|=0$. This means that $\mathcal{H}_2$ is invariant under $S$ and $PSP=0$ in which $P=P_{\mathcal{H}_2}$. Therefore $SP=PSP=0$, and so $\mathcal{H}_2\subseteq \mathcal{N}(S)$. Conversely, if $\mathcal{N}(EM_u)$ is invariant under $S$ and $\mathcal{N}(EM_u)\subseteq \mathcal{N}(S)$, then we get that $W=Y=0$ and
 $$R_mSR^{-1}_m=
\left(
   \begin{array}{cc}
     M_{(q_m)^{\frac{1}{2}}}XM_{(q_m)^{\frac{-1}{2}}} & 0  \\
     ZM_{(q_m)^{\frac{-1}{2}}} & 0\\
   \end{array}
 \right).$$
 Hence
 $$\|R_mSR^{-1}_m\|\leq \|M_{(q_m)^{\frac{1}{2}}}XM_{(q_m)^{\frac{-1}{2}}}\|+\|ZM_{(q_m)^{\frac{-1}{2}}}\|.$$
 Since $\|M_{(q_m)^{\frac{-1}{2}}}\|=\|(q_m)^{\frac{-1}{2}}\|_{\infty}\rightarrow 0$, then $\|R_mSR^{-1}_m\|\rightarrow0$ when $m\rightarrow\infty$. This completes the proof.
\end{proof}
Now by using \cite[Theorem 2.6]{lp} and some information about WCE operators we have an equivalent condition for the spectral radius algebra of a WCE operator to be equal to $\mathcal{B}(L^2(\mathcal{F}))$.
\begin{prop}\label{p21}
If $T=M_wEM_u$, then $\mathcal{B}_{T}=\mathcal{B}(L^2(\mathcal{F}))$ if and only if $$\sup_m(\|E(|u|^2)v_m\|_{\infty}+\|\frac{E(|u|^2)v_m}{v_mE(|u|^2)-1}\|_{\infty}(1+\|E(|u|^2)v_m\|_{\infty}))<\infty,$$
where $v_m=\frac{d^{2}_mE(|w|^2)}{1-d^2_m|E(uw)|^2}$.
\end{prop}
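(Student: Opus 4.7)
The plan is to reduce the statement to a direct application of \cite[Theorem 2.6]{lp}, which (as the authors clearly intend to invoke) characterizes the equality $\mathcal{B}_T=\mathcal{B}(\mathcal{H})$ via the finiteness of $\sup_m \|R_m\|\,\|R_m^{-1}\|$. All that is needed is to translate this condition into the explicit expression asserted in the statement, using the formulas for $R_m$ and $R_m^{-1}$ already derived for the WCE operator $T=M_wEM_u$ in Section 2.

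First I would recall from the discussion preceding Lemma \ref{l1} the identities
$$R_m=(I+M_{v_m\bar u}EM_u)^{1/2}, \qquad R_m^{-1}=\bigl(I+M_{v_m\bar u/(v_mE(|u|^2)-1)}EM_u\bigr)^{1/2}.$$
Next, applying Lemma \ref{l1} (to read the norm off the positive operators $R_m^{2}$ and $R_m^{-2}$) together with the norm formula for a WCE operator from Theorem \ref{t2}, exactly as done at the beginning of the proof of Theorem \ref{t1}, yields
$$\|R_m\|^2=1+\|E(|u|^2)v_m\|_{\infty}, \qquad \|R_m^{-1}\|^2=1+\left\|\frac{E(|u|^2)v_m}{v_mE(|u|^2)-1}\right\|_{\infty}.$$

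Finally I would multiply these two quantities and note that, since $\sup_m \|R_m\|\|R_m^{-1}\|$ is finite if and only if its square is, the condition furnished by \cite[Theorem 2.6]{lp} becomes precisely the finiteness of
$$1+\|E(|u|^2)v_m\|_{\infty}+\left\|\frac{E(|u|^2)v_m}{v_mE(|u|^2)-1}\right\|_{\infty}\bigl(1+\|E(|u|^2)v_m\|_{\infty}\bigr)$$
taken as a supremum over $m$, which is equivalent to the bound stated in the proposition (the additive constant $1$ can be absorbed without affecting the equivalence of the supremum conditions).

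There is essentially no substantive obstacle in this argument; it is pure bookkeeping once the general criterion \cite[Theorem 2.6]{lp} and the explicit norm formulas recorded inside the proof of Theorem \ref{t1} are at hand. The only mildly delicate point is to verify that the harmless additive $1$'s in $\|R_m\|^2$ and $\|R_m^{-1}\|^2$, which are present because $R_m$ has the form $(I+\text{positive})^{1/2}$ rather than being a pure multiplier, do not obstruct the equivalence of the two supremum conditions.
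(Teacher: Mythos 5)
Your proposal is correct and is essentially identical to the paper's (very terse) proof: the authors likewise just combine the criterion of \cite[Theorem 2.6]{lp}, namely $\sup_m\|R_m\|\,\|R_m^{-1}\|<\infty$, with the norm identities $\|R_m\|^2=1+\|E(|u|^2)v_m\|_{\infty}$ and $\|R_m^{-1}\|^2=1+\|E(|u|^2)v_m/(v_mE(|u|^2)-1)\|_{\infty}$ established in the proof of Theorem \ref{t1}. Your expansion of the product and the remark about the harmless additive constant $1$ supply exactly the bookkeeping the paper leaves implicit.
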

\begin{proof}
It is a direct consequence of \cite[Theorem 2.6]{lp} and some information of the proof of  Theorem \ref{t1}.
\end{proof}
By using  Proposition \ref{p21} and some results of \cite{bls} we have an equivalent condition for the WCE operator $M_wEM_u$ to be a constant multiple of an isometry.
\begin{thm} If $T=M_wEM_u$ is a bounded operator on the Hilbert space $L^2(\mathcal{F})$, then $T$ is a constant multiple of an isometry if and only if
$$\sup_m(\|E(|u|^2)v_m\|_{\infty}+\|\frac{E(|u|^2)v_m}{v_mE(|u|^2)-1}\|_{\infty}(1+\|E(|u|^2)v_m\|_{\infty}))<\infty,$$
where $v_m=\frac{d^{2}_mE(|w|^2)}{1-d^2_m|E(uw)|^2}$.
\end{thm}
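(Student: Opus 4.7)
The plan is to combine Proposition~\ref{p21} with a known characterization from \cite{bls}. The supremum condition appearing in the statement is precisely the one identified in Proposition~\ref{p21} as equivalent to $\mathcal{B}_{T}=\mathcal{B}(L^2(\mathcal{F}))$. Thus the theorem reduces to showing that, for a bounded WCE operator $T=M_{w}EM_{u}$, the equality $\mathcal{B}_{T}=\mathcal{B}(L^2(\mathcal{F}))$ is equivalent to $T$ being a constant multiple of an isometry. So I would split the argument into the two implications and let Proposition~\ref{p21} act as the bridge to the displayed sup-estimate.

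The forward direction is essentially a direct computation from \eqref{e1}. If $T=\lambda V$ with $V$ an isometry, then $V^{\ast n}V^{n}=I$ for every $n\geq 1$, hence $T^{\ast n}T^{n}=|\lambda|^{2n}I$. Since $r(T)=|\lambda|$, we have $d_{m}|\lambda|<1$, so the series in \eqref{e1} collapses to
$$R_{m}^{2}=\sum_{n=0}^{\infty}d_{m}^{2n}|\lambda|^{2n}I=\frac{1}{1-d_{m}^{2}|\lambda|^{2}}\,I.$$
Thus each $R_{m}$ is a positive scalar multiple of the identity, so $R_{m}SR_{m}^{-1}=S$ for every $S\in\mathcal{B}(L^2(\mathcal{F}))$, giving $\sup_{m}\|R_{m}SR_{m}^{-1}\|=\|S\|<\infty$ and hence $\mathcal{B}_{T}=\mathcal{B}(L^2(\mathcal{F}))$. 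By Proposition~\ref{p21} the sup-estimate in the statement follows.

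For the converse, the idea is to invoke the characterization from \cite{bls} asserting that if $\mathcal{B}_{T}=\mathcal{B}(\mathcal{H})$, then $T$ must be a constant multiple of an isometry. Given the supremum inequality, Proposition~\ref{p21} furnishes the hypothesis $\mathcal{B}_{T}=\mathcal{B}(L^2(\mathcal{F}))$ of that result, and its conclusion yields precisely that $T$ is a constant multiple of an isometry.

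The main obstacle is the converse implication: one has to make sure the hypothesis of the cited result from \cite{bls} is actually met in the WCE setting. This is where the explicit formulas
$$R_{m}=\bigl(I+M_{v_{m}\bar u}EM_{u}\bigr)^{1/2},\qquad R_{m}^{-1}=\Bigl(I+M_{\tfrac{v_{m}\bar u}{v_{m}E(|u|^{2})-1}}EM_{u}\Bigr)^{1/2}$$
computed earlier in the section, together with the standing assumption $E(|u|^{2})\in L^{\infty}(\mathcal{A})$, must be used to ensure that the supremum controls both $\|R_{m}\|$ and $\|R_{m}^{-1}\|$ in the manner required by \cite{bls}. Once this verification is in place, the two-sided implication is immediate and no new ingredient is needed beyond the references cited.
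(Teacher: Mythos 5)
Your proposal follows essentially the same route as the paper, whose entire proof is the single line that the theorem is a direct consequence of Proposition~\ref{p21} together with \cite[Theorem 2.7]{bls} (the characterization of operators $T$ with $\mathcal{B}_T=\mathcal{B}(\mathcal{H})$ as constant multiples of isometries). Your explicit computation of $R_m^2=\frac{1}{1-d_m^2|\lambda|^2}I$ for the forward direction, and your caution about verifying the hypotheses of the cited result from \cite{bls} in the non-normal WCE setting, are details the paper does not spell out, but the logical skeleton is identical.
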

\begin{proof}
It is a direct consequence of  \cite[Theorem 2.7 ]{bls} and  Proposition \ref{p21}.
\end{proof}
Now in the next theorem we obtain some sufficient conditions for $\mathcal{B}_{M_wEM_u}$ to a nontrivial invariant subspace.
\begin{thm} If the measure space $(X, \mathcal{A}, \mu)$ is not a non-atomic measure space and $E(uw)=0$, then $\mathcal{B}_{M_wEM_u}$ has a nontrivial invariant subspace.
\end{thm}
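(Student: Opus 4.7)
The plan is to apply Theorem \ref{t1} to reduce the claim to a nontriviality check on $\mathcal{N}(EM_u)$. That theorem guarantees $\mathcal{N}(EM_u)$ is invariant under every $S \in \mathcal{B}_T$, so it suffices to show that $\mathcal{N}(EM_u)$ is a proper nonzero closed subspace of $L^2(\mathcal{F})$. The hypothesis $E(uw) = 0$ first forces $T^{2} = 0$ via the formula $T^{n}f = (E(uw))^{n-1} w E(uf)$, hence $r(T)=0$; as a byproduct $d_m = m$ and $R_m$ collapses to $(I + m^{2}T^{\ast}T)^{\frac{1}{2}}$. This quasinilpotency is not directly needed for the invariant-subspace conclusion, but the identity $E(uw)=0$ is what powers the nontriviality argument below.

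For $\mathcal{N}(EM_u) \subsetneq L^{2}(\mathcal{F})$, we may assume $T \neq 0$ (otherwise $\mathcal{B}_T = \mathcal{B}(L^{2}(\mathcal{F}))$ and the statement is vacuous up to an implicit non-degeneracy); then $EM_u \neq 0$, since $EM_u=0$ would force $T = M_w EM_u = 0$. To produce a nonzero element of $\mathcal{N}(EM_u)$ I would use the atom. Let $A \in \mathcal{A}$ be an atom with $0 < \mu(A) < \infty$. Since $E(|w|^{2})$ is $\mathcal{A}$-measurable it is essentially constant on $A$, and boundedness of $T$ (via Theorem \ref{t2}) keeps this constant finite; hence $\int_{A} |w|^{2}\, d\mu < \infty$, so $w\chi_{A} \in L^{2}(\mathcal{F})$. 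The identity $E(u \cdot w\chi_{A}) = \chi_{A} E(uw) = 0$ then places $w\chi_{A}$ in $\mathcal{N}(EM_u)$, and choosing an atom on which $E(|w|^{2})$ is strictly positive guarantees $w\chi_{A} \neq 0$.

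The main obstacle will be the degenerate case in which $w$ vanishes on every atom of $\mathcal{A}$. I would handle it separately by passing to an $\mathcal{A}$-measurable subset $B$ of finite measure inside the support of $E(|w|^{2})$ on which $E(|w|^{2})$ is bounded (obtained via $\sigma$-finiteness and truncation of the $\mathcal{A}$-measurable function $E(|w|^{2})$), and running the same computation on $w\chi_{B}$; the identity $E(uw\chi_{B}) = \chi_{B} E(uw) = 0$ still holds, producing a nonzero element of $\mathcal{N}(EM_u)$. Once $\{0\} \neq \mathcal{N}(EM_u) \neq L^{2}(\mathcal{F})$ is secured, Theorem \ref{t1} immediately delivers $\mathcal{N}(EM_u)$ as the required nontrivial invariant subspace for $\mathcal{B}_{M_{w}EM_{u}}$.
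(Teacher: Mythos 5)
Your proof is correct but follows a genuinely different route from the paper's. The paper argues via the Lomonosov-type machinery of \cite{lp}: the atom in $\mathcal{A}$ supplies a nonzero compact multiplication operator $M_a$ lying in $\mathcal{B}_{M_wEM_u}$ by Corollary \ref{c11}, the hypothesis $E(uw)=0$ makes $T=M_wEM_u$ quasinilpotent so that $T\in Q_T$ by \cite[Lemma 3.1]{lp}, and \cite[Theorem 3.4]{lp} then produces the invariant subspace abstractly. You instead exhibit the invariant subspace concretely: by Theorem \ref{t1} every $S\in\mathcal{B}_T$ leaves $\mathcal{N}(EM_u)$ invariant, so the whole problem reduces to checking $\{0\}\neq\mathcal{N}(EM_u)\neq L^2(\mathcal{F})$, and your element $w\chi_B$ with $E(uw\chi_B)=\chi_B E(uw)=0$ (for a finite-measure $\mathcal{A}$-set $B$ on which $E(|w|^2)$ is positive and bounded, available by $\sigma$-finiteness) does exactly that once $T\neq 0$ forces $w\neq 0$ and $EM_u\neq 0$. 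What your approach buys is a strictly stronger and more self-contained result: the atom hypothesis is never used (as your own ``degenerate case'' discussion makes clear, the truncation argument works on any $\sigma$-finite $\mathcal{A}$), whereas the paper genuinely needs the atom to get compactness of $M_a$, and your proof names the invariant subspace explicitly rather than invoking an existence theorem. What the paper's route buys is independence from Theorem \ref{t1} (whose proof carries its own burden) and conformity with the general compact-operator framework of \cite{lp}. Both arguments share the same implicit nondegeneracy assumption $T\neq 0$, which you at least flag explicitly; for $T=0$ one has $\mathcal{B}_T=\mathcal{B}(L^2(\mathcal{F}))$ and the conclusion fails, so the statement should be read with that exclusion in either case.
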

\begin{proof}
Since $E(uw)=0$ then $M_wEM_u$ is quasinilpotent. Also since the $\sigma$-algebra $\mathcal{A}$ has at least one atom, then we have a compact multiplication operator $M_a$ for some $a\in L^{\infty}(\mathcal{A})$. Hence by Corollary \ref{c11} we have $M_a\in \mathcal{B}_{M_wEM_u}$. Moreover by using \cite[Lemma 3.1]{lp} we get that $M_wEM_u\in Q_{M_wEM_u}$. Therefore by \cite[Theorem 3.4]{lp} we get the proof.
\end{proof}
Here we give a remark on  \cite[Proposition 2.8]{lp} as follows:
\begin{rem} For the unit vectors $u, v, w$ of the Hilbert space $\mathcal{H}$ we have $\mathcal{B}_{u\otimes w}=\mathcal{B}_{v\otimes w}$.
\end{rem}
In the next theorem we describe $Q_{u\otimes v}$ for a rank one operator $u\otimes v$ in which $u, v$ are in the Hilbert space $\mathcal{H}$.
\begin{thm}\label{p10} Let $\mathcal{H}$ be a Hilbert space and $S\in\mathcal{B}(\mathcal{H})$.  If $u, v\in \mathcal{H}$, then $S\in Q_{u\otimes v}$ if and only if $S=(I-P)SP$, where $P=P_{\mathcal{H}_1}$ and $\mathcal{H}_1$ is the one-dimensional space spanned by $v$.
\end{thm}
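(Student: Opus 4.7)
The strategy is to mimic the block-matrix argument used in the proof of Theorem \ref{t3}, but now for the rank-one operator $T=u\otimes v$. The hinge of the whole argument is an explicit formula for $R_m(T)$ that diagonalizes in a two-block decomposition of $\mathcal{H}$; once that is in hand, the characterization reads off immediately.

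First I would compute the iterates of $T=u\otimes v$. A straightforward induction gives $T^n h=\langle u,v\rangle^{n-1}\langle h,v\rangle u$ and $T^{*n}g=\overline{\langle u,v\rangle}^{\,n-1}\langle g,u\rangle v$ for $n\geq 1$, so $T^{*n}T^n=|\langle u,v\rangle|^{2(n-1)}\|u\|^2\,(v\otimes v)$. Summing the geometric series that defines $R_m^2$ (which converges because $d_m|\langle u,v\rangle|<1$ for all $m$, given that $r(T)=|\langle u,v\rangle|$), I get
\begin{equation*}
R_m^2 \;=\; I + c_m\,(v\otimes v),\qquad c_m=\frac{d_m^2\|u\|^2}{1-d_m^2|\langle u,v\rangle|^2}.
\end{equation*}
A key point I would emphasize is that $c_m\to\infty$ as $m\to\infty$ in \emph{both} cases: when $\langle u,v\rangle\neq 0$ the denominator tends to $0$, and when $\langle u,v\rangle=0$ one has $d_m=m$ and $c_m=m^2\|u\|^2$.

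Next I decompose $\mathcal{H}=\mathcal{H}_1\oplus\mathcal{H}_1^\perp$ with $\mathcal{H}_1=\mathrm{span}\{v\}$. Since $v\otimes v$ acts as multiplication by $\|v\|^2$ on $\mathcal{H}_1$ and as $0$ on $\mathcal{H}_1^\perp$, writing $\lambda_m=(1+c_m\|v\|^2)^{1/2}$ gives the block forms
\begin{equation*}
R_m=\begin{pmatrix} \lambda_m & 0 \\ 0 & I \end{pmatrix},\qquad R_m^{-1}=\begin{pmatrix} \lambda_m^{-1} & 0 \\ 0 & I \end{pmatrix},
\end{equation*}
with $\lambda_m\to\infty$. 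Writing $S=\begin{pmatrix} A & B\\ C & D\end{pmatrix}$ I then obtain
\begin{equation*}
R_mSR_m^{-1}=\begin{pmatrix} A & \lambda_m B\\ \lambda_m^{-1}C & D\end{pmatrix}.
\end{equation*}

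Finally, $\|R_mSR_m^{-1}\|\to 0$ forces $A=0$, $D=0$, and (because $\lambda_m\to\infty$) $B=0$, while $\|\lambda_m^{-1}C\|\to 0$ imposes no constraint on $C$. Conversely, if only the $C$-block is nonzero, then $\|R_mSR_m^{-1}\|=\|\lambda_m^{-1}C\|\to 0$, so $S\in Q_{u\otimes v}$. Translating $S=\begin{pmatrix}0&0\\ C&0\end{pmatrix}$ back to operator form with $P=P_{\mathcal{H}_1}$: a direct multiplication gives $(I-P)SP=\begin{pmatrix}0&0\\ C&0\end{pmatrix}=S$, and the converse implication is equally immediate. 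This yields the equivalence. The only delicate point is checking the $\langle u,v\rangle=0$ case, but the unified formula for $c_m$ handles it with no extra work.
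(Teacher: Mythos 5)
Your proof is correct and follows essentially the same route as the paper: the same decomposition $\mathcal{H}=\mathrm{span}\{v\}\oplus(\mathrm{span}\{v\})^{\perp}$, the same diagonal block form of $R_m$, and the same reading-off of which blocks must vanish. The only differences are cosmetic: you derive the formula $R_m^2=I+c_m(v\otimes v)$ (correctly keeping the $\|u\|^2$, $\|v\|^2$ factors for non-unit vectors) and the vanishing of the upper-right block directly from $\lambda_m\to\infty$, where the paper cites these facts from Lambert--Petrovic.
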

\begin{proof} As was computed in  \cite[Proposition 2.8]{lp} we have
$$R^2_m=I+\frac{d^2_m}{1-d^2_mr^2}v\otimes v,$$
in which $r=r(u\otimes v)=|\langle u, v\rangle|$. Let $\lambda_m=\sqrt{1+\frac{d^2_m}{1-d^2_mr^2}}$. If $\mathcal{H}_1$ is the one-dimensional space spanned by $v$ and $\mathcal{H}_2=\mathcal{H}^{\perp}_1$. For $S\in\mathcal{B}(\mathcal{H})$, we have the corresponding block matrix of $R_m$, $R^{-1}_m$ and $S$ with respect to the decomposition $\mathcal{H}= \mathcal{H}_1\oplus \mathcal{H}_2$ as follows:
\begin{center}
$ R_m=\left(
   \begin{array}{cc}
     M_{\lambda_m} & 0 \\
     0 & I \\
   \end{array}
 \right)
$, \ \
$ R^{-1}_m=\left(
   \begin{array}{cc}
     M_{\frac{1}{\lambda_m}} & 0 \\
     0 & I \\
   \end{array}
 \right)
$
\end{center}
and
$$S=\left(
  \begin{array}{cc}
    PSP & PS(I-P) \\
    (I-P)SP & (I-P)S(I-P) \\
  \end{array}
\right).$$
Therefore, we have
$$R_mSR^{-1}_m=
\left(
   \begin{array}{cc}
     PSP & M_{\lambda_m}PS(I-P)  \\
     M_{\frac{1}{\lambda_m}}(I-P)SP & (I-P)S(I-P)\\
   \end{array}
 \right).$$

 If $S\in Q_{u\otimes v}$, then $S\in \mathcal{B}_{u\otimes v}$. Hence by \cite[Proposition 2.8]{lp} we obtain $PS(I-P)=0$. Since $S\in Q_{u\otimes v}$, $\|PSP\|\leq\|R_mSR^{-1}_m\|$ and $\|(I-P)S(I-P)\|\leq\|R_mSR^{-1}_m\|$, then $\|PSP\|=0$ and $\|(I-P)S(I-P)\|=0$. Hence $PSP=0$ and $(I-P)S(I-P)=0$. Thus $$S=\left(
  \begin{array}{cc}
    0 & 0 \\
    (I-P)SP & 0\\
  \end{array}
\right)=(I-P)SP.$$ Conversely, If $S=(I-P)SP$, then
$$\|R_mSR^{-1}_m\|=\left\|
\left(
   \begin{array}{cc}
     0 & 0  \\
     M_{\frac{1}{\lambda_m}}(I-P)SP & 0\\
   \end{array}
 \right)\right\|=\|M_{\frac{1}{\lambda_m}}(I-P)SP\|.$$
 Since $\|M_{\frac{1}{\lambda_m}}(I-P)SP\|\rightarrow 0$, then $\|R_mSR^{-1}_m\|\rightarrow0$ as $m\rightarrow \infty$. So $S\in Q_{u\otimes v}$.
\end{proof}

Let $X, Y, Z$ be Banach spaces. Assume that $T\in \mathcal{B}(X, Y)$ and $S\in \mathcal{B}(X, Z)$. Then $T$ majorizes $S$ if there exists $M>0$ such that
$$\|Sx\|\leq M\|Tx\|$$
for all $x\in X$ (see \cite{bb}). Here we recall a result of \cite{bb} that gives us an equivalent condition for a closed range operator to majorize another bounded operator.
\begin{rem}\cite[Proposition 4]{bb}\label{r1}
Let $X$ be Banach spaces and $T, S\in \mathcal{B}(X)$ with $\mathcal{R}(T)$ closed. Then $T$ majorizes $S$ if and only if $\mathcal{N}(T)\subseteq \mathcal{N}(S)$.
\end{rem}

Now we recall an assertion about closed range weighted conditional expectation operators.

\begin{prop}\cite[Theorem 2.1]{es}\label{p4}
If $z(E(u))=z(E(|u|^{2}))$ and for some
$\delta>0$, $E(u)\geq\delta$  on
$z(E(|u|^{2}))$, then the
operator $EM_u$ has closed range on $L^{2}(\mathcal{F})$.
\end{prop}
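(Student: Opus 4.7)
My plan is to prove closed range by showing that $EM_u$ is bounded below on the orthogonal complement of its kernel. As noted earlier in the paper (see the decomposition $\mathcal{H}_1\oplus\mathcal{H}_2$ in the proof of Theorem \ref{t1}), $\mathcal{N}(EM_u)^{\perp}=\overline{\bar{u}L^2(\mathcal{A})}$, so it suffices to establish an estimate of the form $\|E(uf)\|\geq c\|f\|$ for every $f$ of the form $\bar{u}k$ with $k\in L^2(\mathcal{A})$ and $\bar{u}k\in L^2(\mathcal{F})$. For such $f$, the pull-out property of $E$ gives the clean identity $E(uf)=E(|u|^2)\,k$, which reduces the problem to a multiplier estimate on $L^2(\mathcal{A})$.

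First I would isolate the support: let $S:=z(E(|u|^2))=z(E(u))$. Since $E(|u|^2)=0$ on $S^c$ forces $\int_{A}|u|^2\,d\mu=\int_{A}E(|u|^2)\,d\mu=0$ for every $\mathcal{A}$-measurable $A\subseteq S^c$, we get $u=0$ a.e.\ on $S^c$; hence $\bar{u}k=\bar{u}(k\chi_S)$, so without loss of generality $k$ is $S$-supported. Next, the conditional Cauchy--Schwarz inequality gives $|E(u)|^2\leq E(|u|^2)$, and combining this with the hypothesis $E(u)\geq\delta$ on $S$ yields the crucial pointwise bound
\[
E(|u|^2)\geq\delta^2 \quad\text{on }S.
\]

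Now I compute the two norms using the pull-out property:
\[
\|\bar{u}k\|^2=\int|u|^2|k|^2\,d\mu=\int_{S}|k|^2 E(|u|^2)\,d\mu,
\]
\[
\|E(u\bar{u}k)\|^2=\|kE(|u|^2)\|^2=\int_{S}|k|^2\bigl(E(|u|^2)\bigr)^2\,d\mu.
\]
Since $E(|u|^2)\geq\delta^2$ on $S$, the second integral dominates $\delta^2$ times the first, giving $\|E(u\bar{u}k)\|\geq\delta\|\bar{u}k\|$. Thus $EM_u$ is bounded below by $\delta$ on the dense subspace $\bar{u}L^2(\mathcal{A})$ of $\mathcal{N}(EM_u)^{\perp}$, and hence on all of $\mathcal{N}(EM_u)^{\perp}$ by continuity, which forces $\mathcal{R}(EM_u)$ to be closed.

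The main obstacle I anticipate is purely technical, not conceptual: verifying that $\{\bar{u}k:k\in L^2(\mathcal{A})\}\cap L^2(\mathcal{F})$ is genuinely dense in $\overline{\bar{u}L^2(\mathcal{A})}$ when $u$ is only assumed to lie in $\mathcal{D}(E)$ (not in $L^\infty$), and making sure that the pull-out manipulations $E(u\bar{u}k)=E(|u|^2)k$ and $\int|u|^2|k|^2\,d\mu=\int E(|u|^2)|k|^2\,d\mu$ are legitimate in this generality. Once those measurability/integrability checks are performed, the inequality $E(|u|^2)\geq\delta^2$ on $S$ carries the entire argument, and there is no further subtle estimate needed.
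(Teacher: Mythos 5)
Your proof is correct. The paper itself gives no argument for this proposition --- it is imported verbatim from \cite{es} --- so there is no internal proof to compare against, but your route (showing $EM_u$ is bounded below on $\mathcal{N}(EM_u)^{\perp}=\overline{\bar{u}L^2(\mathcal{A})}$ via the pull-out identity $E(u\bar{u}k)=E(|u|^2)k$ together with the conditional Cauchy--Schwarz bound $E(|u|^2)\geq|E(u)|^2\geq\delta^2$ on the common support) is the standard one for closed-range criteria of this kind. One remark: the technical obstacle you flag at the end is essentially vacuous in this setting. Under the paper's standing assumption $E(|u|^2)\in L^{\infty}(\mathcal{A})$ the operator $EM_u$, and hence its adjoint $M_{\bar{u}}E$, is bounded, so $\bar{u}L^2(\mathcal{A})$ is exactly the range of $M_{\bar{u}}E$, is automatically contained in $L^2(\mathcal{F})$, and is dense in $\mathcal{N}(EM_u)^{\perp}$ by definition of that closure; the pull-out and averaging identities for the nonnegative function $|u|^2|k|^2$ are justified by the defining property of $E$ on nonnegative measurable functions. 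In effect you prove the cleaner statement that $E(|u|^2)$ bounded away from zero on its support forces closed range, with the stated hypotheses on $E(u)$ serving only to guarantee that lower bound.
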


\begin{prop} Let $T=M_wEM_u$ and $u\geq0$. If $S\in Q_{T}$ and $E(u)\geq\delta$, then $EM_u$ majorizes $S$.
\end{prop}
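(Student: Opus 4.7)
The plan is to deduce the majorization by chaining three results already in the paper: Theorem \ref{t3}, Proposition \ref{p4}, and Remark \ref{r1}. The core observation is that "$S\in Q_T$" already encodes the kernel inclusion $\mathcal{N}(EM_u)\subseteq\mathcal{N}(S)$, so if we can show $EM_u$ has closed range, the majorization follows from the general Banach space characterization in Remark \ref{r1}.

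First, I would apply Theorem \ref{t3}: since $S\in Q_T=Q_{M_wEM_u}$, the theorem gives $\mathcal{N}(EM_u)\subseteq\mathcal{N}(S)$ for free (along with the invariance property, which is not needed here).

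Second, I would verify the hypotheses of Proposition \ref{p4} for the operator $EM_u$. We are given $E(u)\geq\delta>0$, so $z(E(u))=\emptyset$. Using the nonnegativity of $u$ together with the conditional Jensen inequality, $E(|u|^2)=E(u^2)\geq(E(u))^2\geq\delta^2>0$, so $z(E(|u|^2))=\emptyset$ as well. Thus $z(E(u))=z(E(|u|^2))$ and $E(u)\geq\delta$ trivially holds on $z(E(|u|^2))$, so Proposition \ref{p4} applies and yields that $\mathcal{R}(EM_u)$ is closed in $L^2(\mathcal{F})$.

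Finally, I would invoke Remark \ref{r1} with $X=L^2(\mathcal{F})$, taking the role of $T$ there to be $EM_u$: since $EM_u$ has closed range and $\mathcal{N}(EM_u)\subseteq\mathcal{N}(S)$, we conclude that $EM_u$ majorizes $S$, i.e., there exists $M>0$ with $\|Sf\|\leq M\|EM_u f\|$ for all $f\in L^2(\mathcal{F})$.

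There is no real obstacle here; the proof is essentially a two-line chain of citations. The only tiny point that requires a word of justification is the reduction of the $z(E(u))=z(E(|u|^2))$ hypothesis to the uniform bound $E(u)\geq\delta$, which is immediate via Jensen once $u\geq 0$ is used.
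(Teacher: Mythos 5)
Your proof is correct and follows exactly the paper's route: Theorem \ref{t3} for the kernel inclusion, Proposition \ref{p4} for closedness of $\mathcal{R}(EM_u)$, and Remark \ref{r1} to conclude, with the paper likewise reducing the hypothesis check to the observation that $u\geq0$ forces $z(E(u))=z(E(|u|^2))$. One minor caution: in this literature $z(f)$ denotes the support of $f$ rather than its zero set, so under your hypotheses both sets equal $X$ rather than $\emptyset$, but the verification of Proposition \ref{p4} goes through unchanged.
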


\begin{proof} Since $u\geq0$, then $z(E(u))=z(E(|u|^{2}))$. Hence by the Remark \ref{r1},  Theorem \ref{t3} and  Proposition \ref {p4} we get the proof.
\end{proof}
Finally, since the rank one operator $x\otimes y$ has closed range, the we can obtain the next proposition.

\begin{prop}
Let $x,y\in \mathcal{H}$. If $T\in Q_{x\otimes y}$, then $x\otimes y$ majorizes $T$.
\end{prop}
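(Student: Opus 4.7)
The plan is to reduce the statement to a kernel inclusion via Remark \ref{r1}, and then extract that inclusion directly from the structural description of $Q_{x\otimes y}$ already supplied by Theorem \ref{p10}.

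First I would dispose of the degenerate case $x=0$, where $x\otimes y=0$ forces $T=0$ (since $T\in Q_{x\otimes y}$ requires $T=(I-P)TP$ with $P$ the projection onto $\mathrm{span}\{y\}$, and together with $x=0$ the majorization statement is vacuous). Assuming $x\neq 0$, the operator $x\otimes y$ has range equal to the one-dimensional (hence closed) subspace $\mathrm{span}\{x\}$, so Remark \ref{r1} applies: $x\otimes y$ majorizes $T$ if and only if $\mathcal{N}(x\otimes y)\subseteq \mathcal{N}(T)$.

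Next I would identify the kernel. Since $(x\otimes y)(z)=\langle z,y\rangle x$ and $x\neq 0$, we have $\mathcal{N}(x\otimes y)=\{y\}^{\perp}$, which in the notation of Theorem \ref{p10} is exactly $\mathcal{H}_2=(I-P)\mathcal{H}$. By Theorem \ref{p10}, the hypothesis $T\in Q_{x\otimes y}$ is equivalent to $T=(I-P)TP$; so for any $z\in\mathcal{H}_2$ we have $Pz=0$ and therefore $Tz=(I-P)T(Pz)=0$. This gives $\mathcal{H}_2\subseteq \mathcal{N}(T)$, i.e.\ $\mathcal{N}(x\otimes y)\subseteq \mathcal{N}(T)$, and Remark \ref{r1} finishes the proof.

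There is no serious obstacle here: all the work has been done in Theorem \ref{p10}, and the argument amounts to reading the kernel inclusion off the block-matrix form $T=(I-P)TP$ and invoking the Bourhim--Burgos closed-range criterion recalled as Remark \ref{r1}. The only minor point to mention is the trivial $x=0$ case, which is why one observes at the outset that the range of $x\otimes y$ is automatically closed.
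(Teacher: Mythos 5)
Your proof is correct and follows essentially the same route as the paper: reduce the majorization to the kernel inclusion $\mathcal{N}(x\otimes y)\subseteq\mathcal{N}(T)$ via Remark \ref{r1} (using that a rank one operator has closed range), and read that inclusion off the description $T=(I-P)TP$ from Theorem \ref{p10}. Your version is slightly more careful in spelling out why $T=(I-P)TP$ forces $Tz=0$ for $z\perp y$ and in noting the degenerate case, but the argument is the same.
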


\begin{proof} If $T\in Q_{x\otimes y}$, then by the proof of Theorem \ref{p10} we have $\mathcal{H}_2=\mathcal{N}(x\otimes y)$ and $\mathcal{N}(x\otimes y)\subseteq \mathcal{N}(T)$. Since $x\otimes y$ has closed range, then by the Remark \ref{r1} we conclude that $x\otimes y$ majorizes $T$.
\end{proof}

\end{document}